\documentclass[11pt]{article}
\usepackage{amssymb,amsmath,amsfonts,amsthm,graphics,ifthen,mathrsfs}
\usepackage[mathcal]{euscript}
\usepackage{epstopdf}
\usepackage{fullpage}
\usepackage[dvips]{graphicx}
\usepackage{setspace}

\usepackage[cmtip,arrow]{xy}

\newtheorem{theorem}{Theorem}
\newtheorem{lemma}[theorem]{Lemma}

\newtheorem{corollary}[theorem]{Corollary}

\newtheorem{claim}{Claim}[theorem]

\newtheorem{observation}[theorem]{Observation}

\newcommand{\cO}{{\mathcal O}}

\newcommand{\Aut}[0]{\operatorname{Aut}}

\setcounter{case}{0} \setcounter{claim}{0}
\parindent 0.2in
\parskip 0.05in

 
 \newenvironment{cem}
{
    \begin{enumerate}
        \setlength{\topsep}{0pt}
        \setlength{\parskip}{0pt}
        \setlength{\partopsep}{0pt}
        \setlength{\parsep}{0pt}         
        \setlength{\itemsep}{0pt} 
}
{
    \end{enumerate} 
}


\topmargin 0.3in

\title{\bf List Distinguishing Parameters of Trees}
\author{Michael Ferrara$^{1,2}$, Ellen Gethner$^3$, Stephen G. Hartke$^{4,5}$,\\ Derrick Stolee$^{4,5,6}$ and Paul S. Wenger$^{1,7}$}
\date{\today}

\begin{document}

\maketitle

\begin{abstract}
A coloring of the vertices of a graph $G$ is said to be {\it distinguishing} provided no nontrivial automorphism of $G$ preserves all of the vertex colors.
The {\it distinguishing number} of $G$, $D(G)$, is the minimum number of colors in a distinguishing coloring of $G$.
The {\it distinguishing chromatic number} of $G$, $\chi_D(G)$, is the minimum number of colors in a distinguishing coloring of $G$ that is also a proper coloring. 

Recently the notion of a distinguishing coloring was extended to that of a list distinguishing coloring.
Given an assignment $L=\{L(v)\}_{v\in V(G)}$ of lists of available colors to the vertices of $G$, we say that $G$ is (properly) {\it $L$-distinguishable} if there is a (proper) distinguishing coloring $f$ of $G$ such that $f(v)\in L(v)$ for all $v$. The {\it list distinguishing number} of $G$, $D_{\ell}(G)$, is the minimum integer $k$ such that $G$ is $L$-distinguishable for any list assignment $L$ with $|L(v)|=k$ for all $v$.  Similarly, the {\it list distinguishing chromatic number} of $G$, denoted $\chi_{D_{\ell}}(G)$ is the minimum integer $k$ such that $G$ is properly $L$-distinguishable for any list assignment $L$ with $|L(v)|=k$ for all $v$.

In this paper, we study these distinguishing parameters for trees, and in particular extend an enumerative technique of Cheng  to show that for any tree $T$, $D_{\ell}(T)=D(T)$, $\chi_D(T)=\chi_{D_{\ell}}(T)$, and $\chi_D(T)\le D(T)+1$. \\ 

\noindent{\textbf{Keywords:} Distinguishing Coloring, List Distinguishing Coloring, Proper Distinguishing Coloring, Distinguishing Chromatic Number, List Distinguishing Chromatic Number}\\
\end{abstract}


\footnotetext[1]{Department of Mathematical and Statistical Sciences, University of Colorado Denver, Denver, CO 80217.\\ \texttt{$\{$michael.ferrara;paul.wenger@ucdenver.edu$\}$}}
\footnotetext[2]{Research supported in part by Simons Foundation Grant \#206692.}
\footnotetext[3]{Department of Computer Science and Engineering, University of Colorado Denver, Denver, CO 80217.\\ \texttt{ellen.gethner@ucdenver.edu.}}
\footnotetext[4]{Department of Mathematics, University of Nebraska-Lincoln, Lincoln NE 68588.\\ \texttt{$\{$hartke;s-dstolee1$\}$@math.unl.edu}}
\footnotetext[5]{Research supported in part by NSF Grant DMS-0914815.}
\footnotetext[6]{Department of Computer Science and Engineering, University of Nebraska-Lincoln, Lincoln NE 68588.}
\footnotetext[7]{School of Mathematical Sciences, Rochester Institute of Technology, Rochester, NY 14623.}


\section{Introduction}

A {\it coloring} of a graph $G$ is a labeling $\phi:V(G)\rightarrow\mathbb{N}$; a {\it $k$-coloring} is a labeling $\phi:V(G)\rightarrow[k]$, where $[k]=\{1,2,\ldots,k\}$.
A coloring of the vertices of a graph $G$ is {\it distinguishing} if no nontrivial automorphism of $G$ preserves all of the vertex colors; such a coloring {\it distinguishes} $G$. In 1996 Albertson and Collins~\cite{AC} introduced the {\it distinguishing number $D(G)$} of a graph $G$, defined to be the minimum number of colors in a distinguishing coloring of $G$.  Since its introduction, the distinguishing number and related parameters have received considerable attention in the literature (see for example \cite{BogCow_Cube,Bout_DetCartProd,FI,KlWZhu_JAlg,KlZhu_CartPow}).  In 2006 Collins and Trenk~\cite{CT} introduced the {\it distinguishing chromatic number $\chi_D(G)$} of a graph $G$, defined to be the minimum number of colors in a distinguishing coloring of $G$ that is also a proper coloring.  Subsequent investigation of the distinguishing chromatic number (for instance, \cite{BhattLal_ChromDistPete,ChHartKaul_ChromPow,JerKlav_DistChromPow,LaFSey_DistBip}) has focused on similarities and disparities between the distinguishing chromatic number and one or both of the distinguishing number and the chromatic number.

One of the most studied variants of the chromatic number is the list chromatic number, introduced independently by Vizing in 1976~\cite{Vizing} and Erd\H{os}, Rubin, and Taylor in 1979~\cite{ERT}.  Recently Ferrara, Flesch, and Gethner~\cite{FFG} extended the notion of a distinguishing coloring to that of a list distinguishing coloring.  Given an assignment $L=\{L(v)\}_{v\in V(G)}$ of lists of available colors to the vertices of $G$, we say that $G$ is {\it $L$-distinguishable} if there is a distinguishing coloring $f$ of $G$ such that $f(v)\in L(v)$ for all $v$. The {\it list distinguishing number} of $G$, $D_{\ell}(G)$, is the minimum integer $k$ such that $G$ is $L$-distinguishable for any list assignment $L$ with $|L(v)|=k$ for all $v$.  The existence of a graph $G$ for which $D(G)$ and $D_{\ell}(G)$ are not equal remains a major open question.

While not explicitly introduced in \cite{FFG}, it is natural to consider a list analogue of the distinguishing chromatic number.  We say that $G$ is {\it properly $L$-distinguishable} if there is a distinguishing coloring $f$ of $G$ chosen from the lists such that $f$ is also a proper coloring of $G$.  The {\it list distinguishing chromatic number $\chi_{D_{\ell}}(G)$} of $G$ is the minimum integer $k$ such that $G$ is properly $L$-distinguishable for any assignment $L$ of lists with $|L(v)|\ge k$ for all $v$.  

In this paper, we study these four distinguishing parameters for trees, showing that $D_{\ell}(T)=D(T)$, $\chi_D(T)=\chi_{D_{\ell}}(T)$, and $\chi_D(T)\le D(T)+1$ for any tree $T$.  In our proofs, we extend an enumerative technique first introduced by Cheng to determine the distinguishing number of trees \cite{ChengTree}.
The technique has subsequently been used to determine the distinguishing number of both planar graphs \cite{ChengPlanar} and interval graphs \cite{ChengInterval}. 

\subsection{Preliminaries}

A vertex $v$ is in the {\it center} of a tree $T$ if $v$ minimizes $\max_{u\in V(T)} \{\operatorname{dist}(u,v)\}$.
In the majority of the paper, we will be considering rooted trees.  A {\it rooted tree} is a tree with an identified vertex called the {\it root}.
Given vertices $u$ and $v$ in a rooted tree $T$ with root $r$, $v$ is the {\it parent} of $u$ if $v$ is the neighbor of $u$ in the unique $u,r$-path in $T$, and $u$ in this case is a {\it child} of $v$.
Two vertices with the same parent are {\it siblings}.
More generally, $v$ is an {\it ancestor} of $u$ if $v$ lies anywhere in the unique $u,r$-path, and in this case $u$ is a {\it descendant} of $v$.
Given a vertex $v$ in a rooted tree $T$, we let $T_v$ denote the subtree of $T$ rooted at $v$ that is induced by $v$ and all of its descendants.
All automorphisms of a rooted tree stabilize the root.

Two colorings $\phi_1$ and $\phi_2$ of rooted trees $T_{v_1}$ and $T_{v_2}$ are {\it equivalent} if there exists an isomorphism $\pi:V(T_{v_1})\rightarrow V(T_{v_2})$ that maps $v_1$ to $v_2$ and that maps the coloring $\phi_1$ to $\phi_2$: that is, for every vertex $u\in V(T_{v_1})$, we have $\phi_1(u)=\phi_2(\pi(u))$.

\section{Distinguishing Colorings}

The main result of this section is the following.

\begin{theorem}\label{theorem:tree_dist}
If $T$ is a tree, then $D(T) = D_{\ell}(T)$.
\end{theorem}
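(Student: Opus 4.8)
The plan is to prove the two inequalities $D(T)\le D_{\ell}(T)$ and $D_{\ell}(T)\le D(T)$ separately. The first is immediate: taking $k=D_{\ell}(T)$ and assigning the list $\{1,\dots,k\}$ to every vertex produces a distinguishing coloring using at most $k$ colors, so $D(T)\le D_{\ell}(T)$. All of the content lies in the reverse inequality, so I would set $k=D(T)$, fix an arbitrary list assignment $L$ with $|L(v)|=k$ for all $v$, and produce an $L$-distinguishing coloring. Root $T$ at its center (a vertex, or an edge whose two endpoints every automorphism either fixes or swaps), so that every automorphism stabilizes the root and the problem becomes one about the rooted subtrees $T_v$.

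The engine is Cheng's recursion \cite{ChengTree}, recast to count colorings rather than merely detect them. A coloring of a rooted tree is distinguishing exactly when, at every vertex, the colored subtrees rooted at its children are pairwise inequivalent within each isomorphism class of (uncolored) child-subtrees. For a fixed palette of $k$ colors this yields Cheng's count $f(v,k)$ of inequivalent distinguishing colorings of $T_v$, satisfying $f(v,k)=k\prod_j \binom{a_j}{n_j}$, where the product runs over the isomorphism classes of child-subtrees of $v$, the $j$-th class containing $n_j$ children each admitting $a_j=f(u,k)$ inequivalent distinguishing colorings; minimality of $D(T)$ guarantees $a_j\ge n_j$ at every vertex when $k=D(T)$. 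I would define the list analogue $N(v)$ to be the number of distinguishing colorings of $T_v$ respecting $L$, counted up to equivalence as colored rooted trees, and prove by induction from the leaves that $N(v)\ge f(v,k)$. Since $k=D(T)$ forces $f$ at the root to be at least $1$ (at least $2$ in the symmetric edge-center case), this yields $N(\text{root})\ge 1$ (resp. $\ge 2$), which is precisely the existence of an $L$-distinguishing coloring.

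The inductive step factors over the root color ($|L(v)|=k$ independent choices) and over the isomorphism classes of children, so it reduces to a single local lemma, which I expect to be the main obstacle. Fix one isomorphism class: $n$ children $u_1,\dots,u_n$, all isomorphic to a common rooted tree $S$. Pulling each child's admissible distinguishing colorings back to $S$ identifies them with a subset $C_i$ of the set $X$ of equivalence classes of distinguishing colorings of $S$, with $|C_i|=N(u_i)\ge a$ by induction, where $a=f(u_i,k)$. An inequivalent distinguishing coloring of this class is exactly a choice of $n$ pairwise-inequivalent colorings, one admissible for each child; up to the free permutation of the $n$ children this is an $n$-element subset $Y\subseteq X$ that can be perfectly matched into $\{u_1,\dots,u_n\}$ in the bipartite graph with edges $y\sim u_i$ iff $y\in C_i$. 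Thus the number of inequivalent colorings of the class equals the number of matchable $n$-subsets, and the lemma I need is that this count is at least $\binom{a}{n}$.

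I would obtain this bound from Hall's theorem together with M.\ Hall, Jr.'s theorem on the number of systems of distinct representatives. Since $|C_i|\ge a\ge n$, every subfamily of the $C_i$ has union of size at least $a\ge n$, so Hall's condition holds and the family $(C_i)$ admits at least $a(a-1)\cdots(a-n+1)$ ordered systems of distinct representatives. Each matchable $n$-subset $Y$ underlies at most $n!$ such ordered systems, so the number of matchable subsets is at least $a(a-1)\cdots(a-n+1)/n!=\binom{a}{n}$, as required. Multiplying the class bounds together with the $k$ choices for the root color gives $N(v)\ge k\prod_j\binom{a_j}{n_j}=f(v,k)$, closing the induction. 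The remaining care is bookkeeping at the center: if the center is a vertex, or an edge with non-isomorphic sides, the factors are independent and positivity of $N$ is immediate, while if the center is an edge with isomorphic sides the relevant case is exactly the $n=2$ instance of the lemma with $a\ge 2$, which yields two inequivalent admissible colorings for the two central subtrees and hence breaks the central swap. The delicate point throughout is that distinct vertex lists make the colorings of isomorphic siblings live in genuinely different admissible sets $C_i$; it is precisely the SDR-counting lemma that shows equal lists are the worst case and recovers Cheng's non-list count as a lower bound.
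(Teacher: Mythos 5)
Your proposal is correct and follows essentially the same route as the paper: reduce to rooted trees via the center, induct over rooted subtrees using Cheng's recursion, and lower-bound the number of inequivalent $L$-distinguishing colorings by the non-list count via the key estimate that $n$ sets of inequivalent child-colorings, each of size at least $a$, admit at least $\binom{a}{n}$ pairwise-inequivalent selections up to reordering. The only cosmetic differences are that you invoke M.~Hall,~Jr.'s SDR-counting theorem where the paper proves the same bound by a direct greedy ordered selection (dividing by $n!$), and you handle the edge-center case directly as an $n=2$ instance rather than by subdividing the central edge.
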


To prove Theorem \ref{theorem:tree_dist}, we shall prove $D(T') = D_\ell(T')$ for any rooted tree $T'$;
this suffices since the following lemma provides a reduction from unrooted trees to rooted trees.

\begin{lemma}\label{lma:rootedunrooted}
	For any tree $T$, there is a rooted tree $T'$ with 
		$D(T) = D(T')$ and $D_\ell(T) = D_\ell(T')$.
\end{lemma}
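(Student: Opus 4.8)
The plan is to exploit the classical structural fact that the center of a tree is either a single vertex or a pair of adjacent vertices, together with the observation that every automorphism of a tree maps the center to itself (setwise). Since both $D$ and $D_\ell$ depend only on the action of $\Aut(T)$ on the colorings of $V(T)$, it suffices to produce a rooted tree $T'$ on which the relevant automorphism group and the admissible colorings are, in an automorphism-respecting way, the same as those of $T$. I would split the argument according to whether the center of $T$ is a vertex or an edge.

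If the center consists of a single vertex $c$, then every automorphism of $T$ fixes $c$, so rooting $T$ at $c$ changes nothing: a permutation of $V(T)$ is an automorphism of the rooted tree $T' := (T,c)$ if and only if it is an automorphism of $T$ (recall that every automorphism of a rooted tree stabilizes the root, and here that happens automatically). Hence $T$ and $T'$ have identical vertex sets, identical automorphism groups, and identical (list) colorings, giving $D(T)=D(T')$ and $D_\ell(T)=D_\ell(T')$ at once.

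If the center is an edge $c_1c_2$, then rooting at either endpoint would destroy the automorphisms that swap $c_1$ and $c_2$, so instead I would subdivide the central edge, introducing a new vertex $w$ adjacent to $c_1$ and $c_2$, and let $T'$ be the resulting tree rooted at $w$. A short eccentricity computation shows that $w$ becomes the unique center of $T'$, so every automorphism of $T'$ fixes $w$; contracting the two new edges then yields a group isomorphism between $\Aut(T')$ and $\Aut(T)$ that acts identically on the common vertices $V(T)\subseteq V(T')$ and sends swaps of $c_1,c_2$ in $T$ to the corresponding swaps in $T'$. The crucial point is that $w$, being fixed by every automorphism, never contributes to distinguishing: given any list assignment, one colors $w$ arbitrarily from its list, and for every nontrivial automorphism the vertex witnessing a color change can be taken in $V(T)$. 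I would formalize this as a bijection between distinguishing $L$-colorings of $T$ and distinguishing $L'$-colorings of $T'$, where $L'$ extends $L$ by an arbitrary list at $w$, obtained by restriction in one direction and by choosing a color for $w$ in the other, and check that it preserves the distinguishing property both ways. Running this equivalence over all list assignments of a fixed size yields $D_\ell(T)=D_\ell(T')$, and the special case of constant lists yields $D(T)=D(T')$.

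The main obstacle is the bookkeeping in the edge-center case: one must verify carefully that $w$ is genuinely the center of $T'$ (so that it is fixed by every automorphism), and that the subdivision/contraction correspondence really is a group isomorphism compatible with the identification of $V(T)$ inside $V(T')$. Once that correspondence is in place, the equivalence of colorings—and hence the equality of both parameters—follows from the observation that a fixed vertex is irrelevant to whether a coloring is distinguishing.
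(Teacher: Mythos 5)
Your proposal is correct and follows essentially the same route as the paper: both use Jordan's theorem that the center is a vertex or an edge and is stabilized by every automorphism, root at the central vertex in the first case, and subdivide the central edge and root at the new vertex in the second. The only difference is that the paper cites Cheng for the edge case and you spell out the argument (the new root is fixed by every automorphism, so restriction/extension of colorings preserves the distinguishing property), which is exactly the ``nearly identical argument'' the paper alludes to.
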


\begin{proof}
In \cite{J}, Jordan proved that the center of a tree $T$ is either a single vertex or an edge, and in either case the center of $T$ is set-wise stabilized by all automorphisms. If the center of $T$ is a single vertex, then let $T'$ be $T$ rooted at the center and note that $\Aut(T) = \Aut(T')$.  

If the center of $T$ is the edge $e$, then let $T'$ be obtained by subdividing $e$ and rooting the resulting tree at the unique vertex in $T'-T$, which is the center of $T'$.  
In \cite{ChengTree}, Cheng showed that $D(T) = D(T')$, 
	and it is straightforward to show by a nearly identical argument that 
	$D_{\ell}(T) = D_{\ell}(T')$ as well. 
\end{proof}


The following observation describes a simple condition that is necessary and sufficient for a coloring to distinguish a rooted tree.

\begin{observation}\label{obs:distinguishsiblings}
	Let $T$ be a rooted tree. 
	For a fixed $x \in V(T)$, partition the children of $x$ into classes $C_1,\dots,C_t$ so that
		two vertices $u, v$ are in the same class if and only if $T_u \cong T_v$.
	A coloring $\phi$ distinguishes $T_x$ if and only if
	\begin{cem}
		\item for all children $v$ of $x$, the restriction of $\phi$ to $T_v$ distinguishes $T_v$, and 
		\item given vertices $u, v \in C_j$ for $j\in[t]$, the restrictions of $\phi$ to $T_u$ and $T_v$ are inequivalent.
	\end{cem}
\end{observation}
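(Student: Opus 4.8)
The plan is to prove both implications through their contrapositives, relying throughout on the fact noted above that every automorphism of a rooted tree fixes its root. The organizing observation is that if $\sigma$ is a color-preserving automorphism of $T_x$, then $\sigma(x)=x$, so $\sigma$ permutes the children of $x$; and whenever $\sigma$ sends a child $u$ to a child $v$, it restricts to a color-preserving isomorphism $T_u \to T_v$. In particular $T_u \cong T_v$, so $\sigma$ preserves each isomorphism class $C_j$ setwise. These facts are what drive both directions.

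First I would show that the two conditions are necessary, by producing a nontrivial color-preserving automorphism of $T_x$ whenever one of them fails. If the first condition fails, there is a child $v$ of $x$ admitting a nontrivial color-preserving automorphism $\tau$ of $T_v$; extending $\tau$ by the identity on $V(T_x)\setminus V(T_v)$ yields a nontrivial color-preserving automorphism of $T_x$, since $v$ is fixed and no edge incident to $x$ is disturbed. If the second condition fails, there are distinct $u,v \in C_j$ whose colorings are equivalent, witnessed by a color-preserving isomorphism $\pi\colon T_u \to T_v$ with $\pi(u)=v$; defining a map that acts as $\pi$ on $T_u$, as $\pi^{-1}$ on $T_v$, and as the identity elsewhere produces a color-preserving automorphism of $T_x$ that is nontrivial because it swaps $u$ and $v$. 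Either way $\phi$ fails to distinguish $T_x$.

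For sufficiency I would again argue the contrapositive: assuming a nontrivial color-preserving $\sigma$ exists, I would deduce the failure of one condition by splitting on the action of $\sigma$ on the children of $x$. If $\sigma$ moves some child, say $\sigma(u)=v\neq u$, then $T_u \cong T_v$ places $u,v$ in a common class $C_j$, and the restriction of $\sigma$ to $T_u$ is a color-preserving isomorphism $T_u\to T_v$ sending $u$ to $v$; hence the colorings on $T_u$ and $T_v$ are equivalent, so the second condition fails. Otherwise $\sigma$ fixes every child of $x$, and then each $T_v$ is $\sigma$-invariant: for any descendant $w$ of $v$, the $w,x$-path runs through $v$, so the $\sigma(w),x$-path runs through $\sigma(v)=v$, forcing $\sigma(w)$ to be a descendant of $v$. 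Thus $\sigma$ restricts to an automorphism of each $T_v$, and since $\sigma$ is nontrivial this restriction is nontrivial for some $v$, contradicting the first condition.

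The routine parts are the verifications that the pieced-together maps (the identity extension, and the swap built from $\pi$ and $\pi^{-1}$) really are graph automorphisms that respect $\phi$; these reduce to checking adjacency across the root and that colors are matched, both immediate from the definitions. The point I expect to be the main obstacle, or at least the step deserving the most care, is the clean dichotomy in the sufficiency argument: one must verify that a nontrivial root-fixing automorphism either genuinely permutes two children (the ``swap'' obstruction, failing condition~2) or stabilizes every child and hence acts nontrivially within a single subtree (the ``local'' obstruction, failing condition~1), leaving no mixed case unaccounted for. This hinges on the $\sigma$-invariance of $T_v$ whenever $v$ is fixed, which is the claim I would state and justify most explicitly.
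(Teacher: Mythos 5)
Your proof is correct and complete. The paper states this result as an Observation and offers no proof at all, so there is nothing to compare against; your argument by contrapositive in both directions---extending a local automorphism by the identity, building a swap from an equivalence isomorphism, and the dichotomy on whether a nontrivial root-fixing automorphism moves a child---is exactly the routine verification the authors left to the reader.
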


Following the notation from \cite{ChengTree}, we let $D(G; k)$ denote the number of equivalence classes of distinguishing $k$-colorings of a graph $G$.  Cheng showed that $D(T_x;k)$ can be computed recursively.

\begin{lemma}[Cheng \cite{ChengTree}]\label{lemma:chengrecursion}
	Let $T_x$ be a tree with root $x$ and partition 
		the children of $x$ into classes $C_1,\dots,C_t$ so that
		two vertices $u, v$ are in $C_j$ if and only if $T_u \cong T_v$. 
	Select a representative $u_j$ for each class $C_j$.
	The number of equivalence classes of distinguishing $k$-colorings of $T_x$ is given by
		\[ D(T_x; k) = k \prod_{j=1}^{t} { D(T_{u_j};k) \choose |C_j|}.\]
\end{lemma}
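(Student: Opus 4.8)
The plan is to prove the recursion $D(T_x;k) = k \prod_{j=1}^{t} \binom{D(T_{u_j};k)}{|C_j|}$ by using Observation~\ref{obs:distinguishsiblings} to reduce counting distinguishing colorings of $T_x$ to independent choices: one for the root's color, and one for each isomorphism class $C_j$ of subtrees hanging off the root.

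First I would set up the counting by specifying what data determines an equivalence class of distinguishing $k$-colorings of $T_x$. An equivalence class is determined by (i) the color assigned to the root $x$, and (ii) for each class $C_j$, the multiset of equivalence classes of distinguishing colorings assigned to the subtrees $\{T_v : v \in C_j\}$. The key point, justified by Observation~\ref{obs:distinguishsiblings}, is that a coloring distinguishes $T_x$ precisely when each subtree $T_v$ is individually distinguished and, within each class $C_j$, the restrictions to distinct subtrees are pairwise inequivalent. Since the subtrees in $C_j$ are mutually isomorphic, ``equivalence class of distinguishing coloring'' is a well-defined notion shared across all of them, and there are exactly $D(T_{u_j};k)$ such classes available (using the representative $u_j$).

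Next I would justify the product and binomial structure. The root may receive any of $k$ colors, contributing the factor $k$; crucially, the root's color never affects whether the overall coloring is distinguishing, since automorphisms of the rooted tree stabilize $x$, so this choice is independent of the rest. For a single class $C_j$, the distinguishing condition forces the $|C_j|$ subtrees to receive pairwise \emph{inequivalent} colorings, and because the subtrees are interchangeable by automorphisms, two assignments yield equivalent colorings of $T_x$ exactly when they select the same \emph{set} of equivalence classes. Choosing $|C_j|$ distinct equivalence classes from the $D(T_{u_j};k)$ available is counted by $\binom{D(T_{u_j};k)}{|C_j|}$. Because the classes $C_1,\dots,C_t$ consist of pairwise non-isomorphic subtrees, an automorphism can never map a subtree in $C_i$ to one in $C_j$ for $i \neq j$, so the choices across distinct classes are independent and the counts multiply, giving the stated product.

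The step I expect to require the most care is verifying that equivalence of colorings of $T_x$ decomposes exactly as this product structure claims—that is, that two colorings of $T_x$ are equivalent if and only if they agree on the root color and, within each $C_j$, induce the same set of subtree-coloring equivalence classes. The forward direction is routine, but the reverse requires building a global automorphism of $T_x$ from the local isomorphisms: given matching sets of equivalence classes within each $C_j$, I would fix a bijection between the subtrees of $C_j$ realizing the matching, compose it with the individual subtree isomorphisms witnessing equivalence, and check that the union of these maps (fixing $x$) is a color-preserving automorphism of $T_x$. The main subtlety is confirming that no cross-class identifications are possible, which is exactly where the hypothesis that $T_u \cong T_v$ if and only if $u,v$ lie in the same class is used.
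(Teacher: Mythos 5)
The paper cites this lemma from Cheng without giving its own proof, so there is no in-paper argument to compare against; your proposal is correct and follows essentially the same decomposition the paper itself uses for the analogous Claim~\ref{claim:chicount} in the proper-coloring setting (a factor of $k$ for the root, and within each class $C_j$ an unordered choice of $|C_j|$ pairwise inequivalent subtree-coloring classes, with independence across classes justified by Observation~\ref{obs:distinguishsiblings} and the fact that automorphisms cannot move subtrees between non-isomorphic classes). Your identified point of care---building a global automorphism from the local subtree isomorphisms to show the counting map is a bijection on equivalence classes---is exactly the right detail to verify, and your sketch of it is sound.
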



For a list assignment $L=\{L(v)\}_{v\in V(G)}$, we let $D(G; L)$ be the number of equivalence classes of distinguishing $L$-colorings of $G$.

\begin{theorem}\label{theorem:list_dist_count}
If $T_x$ is a rooted tree and $L=\{L(v)\}_{v\in V(T_x)}$ is a list assignment with $|L(v)| = k$ for all $v\in V(T_x)$, then $D(T_x;L) \ge D(T_x;k)$.
Equality holds if and only if $D(T_x;L)=0$ or $L(u) = L(v)$ whenever $u$ and $v$ lie in the same orbit of $T_x$.
\end{theorem}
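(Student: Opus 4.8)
The plan is to prove this by structural induction on the rooted tree $T_x$, mirroring Cheng's recursion (Lemma~\ref{lemma:chengrecursion}) but at the level of list colorings rather than ordinary colorings. The base case is a single vertex $x$, where $D(T_x;L) = k = D(T_x;k)$ trivially, and equality always holds while the orbit condition is vacuously satisfied. For the inductive step, I would partition the children of $x$ into isomorphism classes $C_1,\dots,C_t$ as in Observation~\ref{obs:distinguishsiblings}, and build a distinguishing $L$-coloring of $T_x$ by (i) choosing a color for $x$ from $L(x)$, and (ii) for each class $C_j$, assigning to the $|C_j|$ isomorphic subtrees a set of $|C_j|$ \emph{pairwise inequivalent} distinguishing $L$-colorings. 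The key structural fact, from Observation~\ref{obs:distinguishsiblings}, is that these choices are independent across classes and that within a class the only constraint is inequivalence of the chosen colorings. This reduces the count $D(T_x;L)$ to a product of binomial-type terms, one per class.

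The heart of the argument is the following counting inequality. Within a class $C_j = \{v_1,\dots,v_{|C_j|}\}$ of isomorphic subtrees, the subtrees are abstractly isomorphic but their \emph{lists may differ}, since $L$ restricted to $T_{v_i}$ need not agree across the $v_i$. Let $N_i$ denote the number of equivalence classes of distinguishing $L$-colorings of $T_{v_i}$. The number of ways to assign pairwise inequivalent colorings to the subtrees in $C_j$ is at least the number of ways to choose one coloring from each $T_{v_i}$ so that no two agree under the fixed isomorphisms identifying the subtrees. The crucial lemma I would isolate is a combinatorial one: if we have sets $S_1,\dots,S_m$ of ``tagged'' colorings (tagged by their equivalence class in the common abstract tree), each $S_i$ avoiding collisions means choosing a system of distinct representatives across the shared label set. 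By the inductive hypothesis each $N_i \ge D(T_{v_1};k)$, and I expect the count of pairwise-inequivalent selections to be bounded below by $\binom{D(T_{v_1};k)}{|C_j|}$, recovering exactly the factor in Cheng's recursion. Multiplying over all $j$ and the factor $|L(x)| = k$ for the root yields $D(T_x;L) \ge D(T_x;k)$.

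The main obstacle, and the step I would spend the most care on, is the equality characterization. The inductive hypothesis gives equality in each subtree factor if and only if $D(T_{v_i};L)=0$ or the lists agree on orbits of $T_{v_i}$; but I must also control when the \emph{cross-subtree} selection count within a class meets $\binom{D(T_{v_1};k)}{|C_j|}$ rather than exceeding it. Excess arises precisely when the lists on the sibling subtrees in $C_j$ differ, since differing lists create colorings that are inequivalent ``for free'' (a coloring valid on $T_{v_i}$ but using a color unavailable on $T_{v_{i'}}$ cannot collide with a coloring of $T_{v_{i'}}$), strictly enlarging the number of pairwise-inequivalent selections. Thus I would argue: equality across the whole tree forces, for every class $C_j$, that the lists restricted to the sibling subtrees coincide under the identifying isomorphisms, and combined with the inductive orbit condition inside each subtree this is exactly the statement that $L(u)=L(v)$ whenever $u,v$ lie in the same orbit of $T_x$. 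The subtle direction is verifying that matching lists on orbits is also \emph{sufficient} for equality; here I would invoke that when all lists agree on orbits, an equivalence-class-counting bijection matches the list-coloring count to the ordinary $k$-coloring count, so that $D(T_x;L)=D(T_x;k)$ exactly, with the $D(T_x;L)=0$ escape clause handling the degenerate case where no distinguishing coloring exists from the given lists.
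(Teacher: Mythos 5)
Your proposal follows essentially the same route as the paper's proof: induction over subtrees via Cheng's recursion, a greedy count of pairwise-inequivalent colorings within each sibling class yielding the binomial lower bound $\binom{d_j}{|C_j|}$, and an equality analysis combining the inductive orbit condition inside each subtree with matching of lists across sibling subtrees. The one detail you leave implicit that the paper makes explicit is the orbit-permutation argument showing that every color in $L(y)$ is actually realized at $y$ by some representative distinguishing coloring, which is what turns your ``inequivalent for free'' heuristic into a rigorous proof that equality forces $L(y)=L(\sigma(y))$.
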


\begin{proof}
Observe first that if $D(T_x;k) = 0$, then the conclusion holds trivially.
For any vertex $v\in T$, let $L_v$ denote the restriction of $L$ to the vertices in $T_v$.  
We proceed by induction to show that $D(T_v; L_v)\ge D(T_v; k)$ for each subtree $T_v$ of $T_x$.
If $v$ is a leaf, then $D(T_v,L_v)= D(T_v; k)=k$.

Assume that $v$ is not a leaf of $T_x$.
Partition the children of $v$ into classes $C_1,\dots,C_t$ as in Observation \ref{obs:distinguishsiblings}, and label the vertices in class $C_j$ as $v_{j,1}, v_{j,2},\dots, v_{j, m_j}$ where $m_j = |C_j|$.
By induction, $D(T_{v_{j,p}}; L_{v_{j,p}}) \geq D(T_{v_{j,p}}; k)$ for all $p\in[m_j]$.
Set $d_j = D(T_{v_{j,1}}; k)$; it follows that $d_j = D(T_{v_{j,p}}; k)$ for all $p \in [m_j]$ since the subtrees in $T_{v_{j,p}}$ are isomorphic.

For $p\in[m_j]$, let $S_{j,p}$ be a set of representative colorings from each equivalence class of distinguishing $L_{v_{j,p}}$-colorings
	of $T_{v_{j,p}}$; note that $|S_{j,p}|=D(T_{v_{j,p}}; L_{v_{j,p}})$, and by induction, $|S_{j,p}|\ge d_j$.
Let $R_j$ be the set of tuples $(\phi_{j,1},\phi_{j,2},\ldots,\phi_{j,m_j})\in S_{j,1}\times S_{j,2} \times \cdots \times S_{j,m_j}$
    such that $\phi_{j,p}$ and $\phi_{j,q}$ are inequivalent when $p\neq q$.
By Observation \ref{obs:distinguishsiblings}, an $L_v$-coloring $\phi$ distinguishes $T_v$ if and only if, for every class $C_j$,
	the tuple $(\phi_{j,1},\phi_{j,2},\ldots,\phi_{j,m_j})$ of colorings induced on $T_{v_{j,p}}$ is component-wise equivalent to an element of $R_j$.
Let $r_j$ be the number of equivalence classes in $R_j$, where two tuples are equivalent if they are the same up to the order of the coordinates.
A maximum set of inequivalent $L_v$-colorings of $T_v$ is formed by independently selecting one tuple $(\phi_{j,1},\phi_{j,2},\ldots,\phi_{j,m_j})$ from each equivalence class of $R_j$ for each class $C_j$ and selecting any color for $v$ from $L(v)$.
Hence, $D(T_v; L_v) = k \prod_{j=1}^t r_j$.
	
We bound $r_j$ by selecting colorings $(\phi_{j,1},\phi_{j,2},\ldots,\phi_{j,m_j})$ 
	for the subtrees $T_{v_{j,1}},\ldots,T_{v_{j,m_j}}$ in order.
By induction, for $p\in[m_j]$, there are at least $d_j - p + 1$ 
	choices for $\phi_{j,p}$ in $S_{j, p}$ that are inequivalent to the previous selections for $\phi_{j,1}, \dots, \phi_{j,p-1}$.
Thus, there are at least $d_j(d_j-1)\cdots (d_j-m_j+1)$ 
	ways to select $m_j$ inequivalent colorings $(\phi_{j,1},\phi_{j,2},\ldots,\phi_{j,m_j})$.
Each equivalence class in $R_j$ is counted at most $m_j!$ times,
	so 
	\[r_j \geq \frac{d_j(d_j-1)\cdots(d_j-m_j+1)}{m_j!} = {d_j \choose m_j} = {D(T_{v_{j,1}}; k)\choose |C_j|}.\]
Therefore by Lemma \ref{lemma:chengrecursion},
\[
	D(T_v; L_v) = k \prod_{j=1}^t r_j
		\geq k \prod_{j=1}^t {D(T_{v_{j,1}}; k) \choose |C_j|} 
		= D(T_v; k).
\]

If $D(T_x;L)=0$, then it is clear that $D(T_x;L)=D(T_x;k)$.
Otherwise we show that equality holds if and only if $L(u)=L(v)$ when $u$ and $v$ lie in the same orbit of $T_x$.
We prove this by induction on the number of vertices in $T_x$.
Let $L$ be a list assignment such that $D(T_x;L)>0$ and $D(T_x;L)=D(T_x;k)$.
The result holds trivially if $T_x$ has a single vertex.

Let $C_1,\ldots,C_t$ be the partition of the children of $x$ as above.
Equality holds for $T_x$ if and only if $r_j = {D(T_{v_{j,1}}; k) \choose |C_j|}$ for all $j\in[t]$.
Furthermore, $r_j = {D(T_{v_{j,1}}; k) \choose |C_j|} = \frac{d_j(d_j-1)\cdots(d_j-m_j+1)}{m_j!}$ if and only if $|S_{j,p}|=d_j$ for all $p\in[m_j]$ and every coloring in $S_{j,p}$ has an equivalent coloring in $S_{j,q}$ for all $p, q\in [m_j]$.
By induction, $|S_{j,p}|=d_j$ if and only if $L(y)=L(w)$ when $y$ and $w$ lie in the same orbit of $T_{v_{j,p}}$.

Consider a vertex $y$ in $T_{v_{j,p}}$.
By permuting colors on the orbit $\cO$ containing $y$, each color in $L(y)$ appears in $\cO$ in a distinguishing $L$-coloring $\phi_{j,p}$ in $S_{j,p}$.
The isomorphism $\sigma : V(T_{v_{j,p}}) \to V(T_{v_{j,q}})$ guaranteed by an equivalent pair $(T_{v_{j,p}}, \phi_{j,p}) \cong (T_{v_{j,q}}, \phi_{j,q})$ satisfies $\phi_{j,p}(y) = \phi_{j,q}(\sigma(y))$ for all $y \in V(T_{v_{j,p}})$.
Therefore, each color in $L(y)$ also appears in the lists of the vertices in the orbit of $\sigma(y)$ in $T_{v_{j,q}}$, and consequently $L(y) = L(\sigma(y))$.
Each vertex in the orbit of $y$ in $T_x$ that is not in $T_{v_{j,p}}$ lies in the image of $\cO$ under some such isomorphism $\sigma$.
Thus, every vertex in the orbit of $y$ in $T_x$ has the same list of colors as $y$.
Consequently, if equality holds, then $L(u)=L(v)$ whenever $u$ and $v$ lie in the same orbit of $T_x$.

Conversely, if $L(u)=L(v)$ whenever $u$ and $v$ lie in the same orbit of $T_x$, then $L(y)=L(w)$ when $y$ and $w$ lie in the same orbit of $T_{v_{j,p}}$.
Furthermore, each coloring $S_{j,p}$ has an equivalent coloring in $S_{j,q}$ for all $p, q\in [m_j]$.
\end{proof}

Note that $D(T)$ is the minimum $k$ so that $D(T; k)$ is positive.
Similarly, $D_\ell(T)$ is the minimum $k$ so that $D(T; L)$ is positive
	for every list assignment $L$ with $|L(v)| = k$.
By considering the list assignment that gives the same $k$ colors to every vertex, it is clear that $D_\ell(T)$ is positive only if $D(T)$ is.
Thus, the following corollary is immediate from Theorem \ref{theorem:list_dist_count}.

\begin{corollary}\label{cor:rootedlistequality}
If $T$ is a rooted tree, then $D(T) = D_\ell(T)$.
\end{corollary}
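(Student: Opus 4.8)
The plan is to deduce the corollary directly from Theorem~\ref{theorem:list_dist_count} by establishing the two inequalities $D_\ell(T) \ge D(T)$ and $D_\ell(T) \le D(T)$ separately. Recall that $D(T)$ is the least $k$ for which $D(T;k) > 0$, while $D_\ell(T)$ is the least $k$ such that $D(T;L) > 0$ for \emph{every} list assignment $L$ with $|L(v)| = k$. The whole point is that these two minima coincide, and both directions are short once the theorem is in hand.

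For the lower bound, I would consider the ``constant'' list assignment $L_0$ that gives every vertex the same palette $\{1,\dots,k\}$. For this particular assignment the $L_0$-distinguishing colorings are exactly the distinguishing $k$-colorings, so $D(T;L_0) = D(T;k)$. If $k < D(T)$, then $D(T;k) = 0$ by the definition of $D(T)$, and hence $D(T;L_0) = 0$, so $T$ fails to be $L_0$-distinguishable. This exhibits a bad list assignment for every $k < D(T)$, and therefore no such $k$ can witness the list distinguishing number; that is, $D_\ell(T) \ge D(T)$.

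For the upper bound, set $k = D(T)$, so that $D(T;k) \ge 1$. By Theorem~\ref{theorem:list_dist_count}, every list assignment $L$ with $|L(v)| = k$ satisfies $D(T;L) \ge D(T;k) \ge 1 > 0$, so $T$ is $L$-distinguishable for every such $L$. Hence $D_\ell(T) \le k = D(T)$, and combining the two bounds yields $D_\ell(T) = D(T)$.

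Since the substantive content is carried entirely by Theorem~\ref{theorem:list_dist_count}, there is no genuine obstacle here. The only point requiring care is the asymmetry of the two directions: the lower bound needs merely the existence of a single bad list assignment (the constant one), whereas the upper bound requires the inequality $D(T;L) \ge D(T;k)$ to hold uniformly across all list assignments of size $k$, which is exactly the universally quantified conclusion the theorem supplies.
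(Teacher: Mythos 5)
Your proposal is correct and matches the paper's argument: the paper likewise obtains the lower bound by considering the constant list assignment and the upper bound by applying the inequality $D(T;L)\ge D(T;k)$ from Theorem~\ref{theorem:list_dist_count}. Your write-up simply makes both directions more explicit than the paper's one-line justification.
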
		

Theorem \ref{theorem:tree_dist} follows from Lemma \ref{lma:rootedunrooted} and Corollary \ref{cor:rootedlistequality}.

\section{List Distinguishing Chromatic Number}

In this section, we prove that the distinguishing chromatic number of a tree is equal to the list distinguishing chromatic number using a similar enumerative method.

\begin{theorem}\label{thm:chromaticunrooted}
	If $T$ is a tree, then $\chi_{D}(T) = \chi_{D_\ell}(T)$.
\end{theorem}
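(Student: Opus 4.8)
The plan is to mimic the structure that proved Theorem \ref{theorem:tree_dist}, replacing the distinguishing-count $D(T_x;k)$ with a count of \emph{proper} distinguishing colorings. The reduction from unrooted to rooted trees from Lemma \ref{lma:rootedunrooted} should carry over essentially verbatim: the center of a tree is set-wise stabilized by every automorphism, so I would root $T$ at its central vertex when the center is a single vertex, and subdivide the central edge and root at the new vertex otherwise. The one new wrinkle is that subdivision must be checked to preserve the \emph{proper} distinguishing chromatic number, not just the distinguishing number; since the subdivision vertex has degree two and can always be given a color distinct from its two neighbors (as long as lists have size at least two, which holds whenever $\chi_{D_\ell}\ge 2$), this should not change the parameter. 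So it suffices to prove $\chi_D(T') = \chi_{D_\ell}(T')$ for rooted trees $T'$.

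\textbf{The main work} is a proper-coloring analogue of Theorem \ref{theorem:list_dist_count}. I would define $\chi_D(T_x;k)$ to be the number of equivalence classes of proper distinguishing $k$-colorings, and $\chi_D(T_x;L)$ the list version, and prove $\chi_D(T_x;L)\ge \chi_D(T_x;k)$ by the same induction on the tree. The recursive skeleton is identical: partition the children of $v$ into isomorphism classes $C_1,\dots,C_t$, choose representatives, and observe (via Observation \ref{obs:distinguishsiblings}) that a coloring distinguishes $T_v$ iff it distinguishes each $T_{v_{j,p}}$ and assigns inequivalent colorings to siblings in the same class. The selection-in-order argument then yields $\chi_D(T_v;L_v)\ge k' \prod_j \binom{\chi_D(T_{v_{j,1}};k)}{|C_j|}$ for an appropriate count of admissible colors at $v$.

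\textbf{The hard part} — and the genuine departure from the non-proper case — is the vertex $v$ itself. In the distinguishing (non-proper) setting, the root contributes a clean factor of $k$ because \emph{any} of the $k$ colors may be placed at $v$ independently of the subtrees. For proper colorings this fails: the color of $v$ must differ from the colors assigned to each child. I would handle this by choosing the color of $v$ \emph{last}, after fixing the colorings of all subtrees, and arguing that the number of forbidden colors at $v$ is bounded by the number of distinct colors appearing on its children, which is independent of which equivalence-class representatives were selected. One must verify that the list $L(v)$ still leaves at least as many admissible colors as the uniform-list case does, so that the per-vertex factor in the list count dominates the factor in the $k$-uniform count; the worst case is when the children exhaust as many colors of $L(v)$ as possible, and the bound $\chi_D(T_x;L)\ge\chi_D(T_x;k)$ should follow from the fact that proper $k$-colorings face exactly the same constraint on the root. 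Once this inequality is established, the corollary $\chi_D(T')=\chi_{D_\ell}(T')$ follows exactly as Corollary \ref{cor:rootedlistequality} did: $\chi_{D_\ell}$ is the least $k$ making $\chi_D(T_x;L)$ positive for all lists of size $k$, the uniform list shows this cannot be smaller than $\chi_D$, and the inequality shows it is no larger.
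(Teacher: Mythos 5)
Your overall plan matches the paper's (reduce to rooted trees, then prove a list-versus-uniform counting inequality by induction), but the resolution you sketch for what you correctly identify as ``the hard part'' does not work, and the paper's actual fix is a different and essential idea. The paper does not prove the aggregate inequality $D_\chi(T_x;L)\ge D_\chi(T_x;k)$ by choosing the root's color last; instead it refines the count by the root's color and proves $D_\chi(T_x;L,i)\ge D_\chi(T_x;k,1)$ for \emph{every} $i\in L(x)$, where $D_\chi(T_x;L,i)$ counts classes of proper distinguishing $L$-colorings in which $x$ receives color $i$. This strengthening is exactly what the induction needs: when the parent $v$ receives color $i$, each child $v_{j,p}$ must avoid $i$, and the per-color hypothesis gives $\sum_{i'\in L(v_{j,p})\setminus\{i\}} D_\chi(T_{v_{j,p}};L_{v_{j,p}},i')\ge (k-1)D_\chi(T_{v_{j,p}};k,1)$, matching the uniform-list factor $(k-1)D_\chi(T_{v_{j,1}};k,1)$ in the recursion. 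Your aggregate hypothesis cannot deliver this: in the list setting the distinguishing colorings of a child's subtree could in principle be concentrated on the one root color that the parent forbids, and ``the number of forbidden colors at $v$ is independent of which representatives were selected'' is false --- different representative colorings assign different colors to the child roots, so the set of admissible colors at $v$ genuinely depends on the choices made below, and the count does not factor as (child tuples)$\times$(admissible root colors).

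There is a second gap in your reduction to rooted trees. When the center of $T$ is an edge $uv$ and $\chi_D(T)=2$, the subdivision vertex $x$ is adjacent to both $u$ and $v$, which must receive the two distinct colors of any proper $2$-coloring of $T$; hence $x$ needs a third color and $\chi_D(T')=3>\chi_D(T)$, so your claim that a degree-two subdivision vertex can always be properly colored from a list of size two fails, and the reduction breaks. The paper isolates this case in a separate lemma, showing directly that here $\operatorname{Aut}(T)\cong\mathbb{Z}_2$ and any proper $2$-coloring (from any lists of size $2$) distinguishes $T$. Even in the case $\chi_D(T)\ge 3$, recovering a proper coloring of $T$ from one of $T'$ requires care ($u$ and $v$ are adjacent in $T$ but not in $T'$), and the paper again invokes the per-color theorem to choose distinct colors at $u$ and $v$; so the rooted--unrooted reduction is not ``essentially verbatim'' and itself depends on the refined counting statement you are missing.
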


For $x \in V(T)$, let $D_\chi(T_x; k)$ denote the number of equivalence classes of distinguishing proper $k$-colorings 
	of the rooted tree $T_x$.
For $i \in [k]$, let $D_\chi(T_x; k, i)$ denote the number of equivalence classes of distinguishing proper $k$-colorings 
	of $T_x$ in which $x$ gets color $i$.
Note that $D_\chi(T_x; k, i) = D_\chi(T_x; k, 1) = \frac{1}{k}D_\chi(T_x; k)$ for all $i \in [k]$.
Hence $D_\chi(T_x; k)=kD_\chi(T_x; k, 1)$.

Similarly, let $D_\chi(T_x; L)$ denote the number of equivalence classes of distinguishing proper $L$-colorings 
	of the rooted tree $T_x$.
For $i \in L(x)$,
	let $D_\chi(T_x; L, i)$ be the number of equivalence classes of 
	distinguishing proper $L$-colorings 
	of $T_x$ in which $x$ gets color $i$.
Here, the value $D_\chi(T_x; L, i)$ may change for different values of $i \in L(x)$.
Thus $D_\chi(T_x; L)=\sum_{i \in L(x)} D_\chi(T_x; L, i)$.

	
As we discuss below, consideration of rooted trees is sufficient to demonstrate $\chi_D(T) = \chi_{D_\ell}(T)$ in most, but not all unrooted cases.
The next result is analogous to Theorem \ref{theorem:list_dist_count}.


\begin{theorem}\label{theorem:list_chromatic_count}
	If $T_x$ is a rooted tree and $L = \{L(v)\}_{v\in V(T_x)}$ is
		a list assignment with $|L(v)| = k$ for all $v \in V(T_x)$, then
		 $D_\chi(T_x; L, i) \geq D_\chi(T_x; k, 1)$ for all $i \in L(x)$.
	Equality holds for all $i\in L(x)$ if and only if $k\ge 2$ and all lists in $L$ are identical.
\end{theorem}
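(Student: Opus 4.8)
The plan is to imitate the inductive proof of Theorem~\ref{theorem:list_dist_count} on $|V(T_x)|$, the one new feature being that properness forces the root of each child subtree to avoid its parent's color. The first step is to record the chromatic analogue of Cheng's recursion (Lemma~\ref{lemma:chengrecursion}). Fixing the root color $i$, partitioning the children of $x$ into isomorphism classes $C_1,\dots,C_t$ with representatives $u_1,\dots,u_t$, a proper distinguishing coloring with root color $i$ restricts on each $T_{u_j}$ to a proper distinguishing coloring whose root avoids $i$. Because an isomorphism of a rooted tree fixes the root, colorings with distinct root colors are inequivalent, so by color symmetry there are exactly $(k-1)D_\chi(T_{u_j};k,1)$ equivalence classes of such colorings. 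Observation~\ref{obs:distinguishsiblings} then gives
\[ D_\chi(T_x;k,i)=\prod_{j=1}^t \binom{(k-1)D_\chi(T_{u_j};k,1)}{|C_j|}, \]
which is independent of $i$ and reconfirms $D_\chi(T_x;k,i)=D_\chi(T_x;k,1)$.

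For the inequality I would reuse the counting scheme of Theorem~\ref{theorem:list_dist_count} verbatim, replacing the representative sets $S_{j,p}$ by sets $S_{j,p}^{(i)}$ of representatives of the proper distinguishing $L_{v_{j,p}}$-colorings of $T_{v_{j,p}}$ whose root avoids $i$. The only new estimate, furnished by the induction hypothesis, is
\[ |S_{j,p}^{(i)}|=\sum_{i'\in L(v_{j,p})\setminus\{i\}} D_\chi(T_{v_{j,p}};L_{v_{j,p}},i') \ge (k-1)\,d_j, \]
where $d_j:=D_\chi(T_{v_{j,1}};k,1)$, since the sum has at least $k-1$ terms and each term is at least $d_j$. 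Choosing pairwise-inequivalent colorings one subtree at a time and dividing by $|C_j|!$ yields $r_j^{(i)}\ge\binom{(k-1)d_j}{|C_j|}$, and taking the product over $j$ (with no extra factor, as the root color is fixed) gives $D_\chi(T_x;L,i)\ge D_\chi(T_x;k,1)$.

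The substance of the theorem is the equality direction, where properness does something that has no counterpart in Theorem~\ref{theorem:list_dist_count}. Equality in $r_j^{(i)}\ge\binom{(k-1)d_j}{|C_j|}$ forces $|S_{j,p}^{(i)}|=(k-1)d_j$ for every $p$. This quantity is a sum of terms each at least $d_j$, with exactly $k-1$ terms when $i\in L(v_{j,p})$ and $k$ terms otherwise; hence, assuming $d_j>0$, equality forces both $i\in L(v_{j,p})$ and $D_\chi(T_{v_{j,p}};L_{v_{j,p}},i')=d_j$ for every $i'\in L(v_{j,p})\setminus\{i\}$. Demanding equality for \emph{all} $i\in L(x)$ now yields $L(x)\subseteq L(v_{j,p})$, hence $L(x)=L(v_{j,p})$ since the lists have equal size; and letting $i$ range over $L(x)$ --- which has at least two elements exactly when $k\ge 2$ --- certifies $D_\chi(T_{v_{j,p}};L_{v_{j,p}},i')=d_j$ for \emph{every} $i'\in L(v_{j,p})$. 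At this point the inductive equality hypothesis applies to each $T_{v_{j,p}}$ and shows that all lists within it are identical; since $L(v_{j,p})=L(x)$ and every vertex of $T_x$ other than $x$ lies in some $T_{v_{j,p}}$, all lists in $T_x$ coincide. Notably, this routes around the cross-copy isomorphism bookkeeping needed in Theorem~\ref{theorem:list_dist_count}, because the parent--child containment already equalizes the sibling lists directly. The converse is immediate: if $k\ge 2$ and every list equals a common $k$-set, an $L$-coloring is literally a $k$-coloring and color symmetry gives $D_\chi(T_x;L,i)=D_\chi(T_x;k,i)=D_\chi(T_x;k,1)$.

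I expect the delicate point to be isolating the two independent sources of slack --- the \emph{number} of colors admissible at a child root (which must fall from $k$ to $k-1$, forcing $i\in L(v)$) versus the inductive per-color deficit $D_\chi(T_{v_{j,p}};L_{v_{j,p}},i')-d_j$ --- and confirming that the hypothesis $k\ge 2$ is precisely what lets $i$ sweep across $L(x)$ so as to certify equality at every child color $i'$ and thereby invoke the inductive characterization. The degenerate boundary ($k=1$, or a class with $d_j=0$), where both counts may vanish and the implication can fail for trivial trees, should be isolated at the outset by working in the regime $D_\chi(T_x;k,1)>0$; once $T_x$ contains an edge this already forces $k\ge 2$, so only trivial cases sit outside the stated characterization.
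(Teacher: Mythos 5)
Your proposal is correct and follows essentially the same route as the paper: the same Cheng-style recursion with the root color fixed (the paper's Claim~\ref{claim:chicount}), the same greedy count giving $r_j^i\ge\binom{(k-1)d_j}{|C_j|}$, and the same two-pronged equality analysis (forcing $i\in L(v_{j,p})$ for every $i\in L(x)$ to equalize parent and child lists, then sweeping $i$ over $L(x)$ to invoke the inductive characterization inside each child subtree). Your explicit isolation of the degenerate boundary where $d_j=0$ and both counts vanish is a point the paper leaves implicit, but it does not change the argument.
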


\begin{proof}
	Similar to the proof of Theorem \ref{theorem:list_dist_count}, we show that
		$D_\chi(T_v;L_v,i) \geq D_\chi(T_v; k, 1)$ for all $v \in V(T_x)$ and $i \in L(v)$ inductively. 
	If $v$ is a leaf, then $D_\chi(T_v; L_v, i) = D_\chi(T_v; k, 1) = 1$.
	
Assume that $v$ is not a leaf.
	Partition the children of $v$ into equivalence classes $C_1, C_2, \dots, C_t$ 
		by isomorphism classes of subtrees.
	Label the vertices in each class $C_j$ as $v_{j,1}, v_{j,2},\dots, v_{j,m_j}$ 	
		where $m_j = |C_j|$.
	The following claim is analogous to Lemma \ref{lemma:chengrecursion}.
	
	\begin{claim}\label{claim:chicount}
		For $i \in [k]$, 
		$\displaystyle D_\chi(T_v; k, i) = \prod_{j=1}^t {(k-1)D_\chi(T_{v_{j,1}}; k, 1) \choose |C_j| }$.
	\end{claim}
	
	\begin{proof}
	By Observation~\ref{obs:distinguishsiblings}, a proper $k$-coloring $\phi$ such that $\phi(v)=i$ distinguishes $T_v$ 
		if and only if
		the colorings $\phi_{v_{j,p}}$, obtained by restricting $\phi$ to the trees $T_{v_{j,p}}$ for $v_{j,p} \in C_j$, are inequivalent distinguishing proper $k$-colorings
		with $\phi_{v_{j,p}}(v_{j,p}) \neq i$.
	Hence, there are $(k-1)D_\chi(T_{v_{j,1}}; k, 1)$ possible colorings 
		for $T_{v_{j,1}}$.
	Since $T_{v_{j,p}} \cong T_{v_{j,1}}$ for all $p\in[m_j]$, there are $(k-1)D_\chi(T_{v_{j,1}}; k, 1)$ possible colorings of $T_{v_{j,p}}$.
	We choose $|C_j|$ of these colorings to place on the trees $T_{v_{j,p}}$ for $v_{j,p} \in C_j$.
	Since $D_\chi(T_v; k, i)$ counts colorings up to isomorphism,
	the selection of these colorings is independent of their order in $C_j$.
	Also, the choices of these colorings are independent among the different isomorphism classes 
		$\{C_j\}_{j=1}^t$, so $D_\chi(T_v; k, i)$ is given by the product 
		$\prod_{j=1}^t { (k-1) D_\chi(T_{v_{j,1}}; k, 1) \choose |C_j| }$, 
		proving the claim.
	\end{proof}
	
	
Let $d_j^i = (k-1)D_\chi(T_{v_{j,1}}; k, 1)$.
	By induction, $$\sum_{i' \in L(v_{j,p})\setminus\{i\}} D_\chi(T_{v_{j,p}}; L_{v_{j,p}}, i') \geq (k-1)D_\chi(T_{v_{j,p}}; k, 1) = d_j^i.$$

We now consider the number of equivalence classes of distinguishing proper $L_{v_{j,p}}$-colorings of $T_{v_{j,p}}$ in which $v_{j,p}$ does not get color $i$.
Let $S_{j,p}^i$ be a set of representative colorings from each such equivalence class.
Let $R_j^i$ be the set of tuples $(\phi_{j,1},\phi_{j,2},\ldots,\phi_{j,m_j})\in S_{j,1}^i \times S_{j,2}^i \times \cdots \times S_{j,m_j}^i$
	    such that $\phi_{j,p}$ and $\phi_{j,q}$ are inequivalent when $p\neq q$.
By induction,
\[|S_{j,p}^i| = \sum_{i' \in L(v_{j,p}) \setminus\{i\}} D_\chi(T_{v_{j,p}}; L_{v_{j,p}}, i') \geq d_j^i.\]
	By Observation \ref{obs:distinguishsiblings}, 
		a proper $L_v$-coloring distinguishes $T_v$ if and only if, 
		for every class $C_j$, 
		the colorings $\phi_{j,p}$ induced on $T_{v_{j,p}}$
		form a tuple $(\phi_{j,1},\phi_{j,2},\ldots,\phi_{j,m_j})\in R_j^i$.
	Let $r_j^i$ be the number of equivalence classes in $R_j^i$, where
		two tuples are equivalent if they are the same up to the order of the coordinates.
	Form a maximum set of inequivalent proper $L_v$-colorings $\phi$ in which $v$ gets color $i$
		by independently selecting one tuple $(\phi_{j,1},\phi_{j,2},\ldots,\phi_{j,m_j})$
		from each equivalence class of $R_j^i$ for each class $C_j$.
	Hence, $D_\chi(T_v; L_v, i) = \prod_{j=1}^t r_j^i$.

	We bound $r_j^i$ by selecting colorings $(\phi_{j,1}, \dots, \phi_{j,m_j})$ for subtrees in order.
	For $p\in[m_j]$, there are at least $d_j^i-p+1$ selections for $\phi_{j,p}$ from $S_{j,p}$
	that are inequivalent to the selections of
			$\phi_{j,1}, \dots, \phi_{j,p-1}$.
	Thus, there are at least $d_j^i(d_j^i-1)\cdots(d_j^i - m_j +1)$ possible ways to select
		$m_j$ inequivalent colorings $\phi_{j,1},\phi_{j,2},\ldots,\phi_{j,m_j}$.
	Each equivalence class in $R_j^i$ is counted at most $m_j!$ times, so 
		\[r_j^i \geq \frac{d_j^i(d_j^i-1)\cdots(d_j^i-m_j+1)}{m_j!} = {d_j^i\choose m_j} = {(k-1)D_\chi(T_{v_{j,1}}; k, 1)\choose |C_j|}.\]
	Therefore, by Claim \ref{claim:chicount},
	\[ D_\chi(T_v; L, i) = \prod_{j=1}^t r_{j}^i \geq \prod_{j=1}^t {(k-1) D_\chi(T_{v_{j,1}};k,1)\choose |C_j|} = D_\chi(T_v; k, 1).\]

Consider the partition of the children of $x$ as in Observation~\ref{obs:distinguishsiblings}.
	Equality holds for all colors in $L(x)$ if and only if, given an arbitrary $i\in L(x)$, $r_j^i={(k-1)D_\chi(T_{v_{j,1}};k,1)\choose |C_j|}$ for all for all $j \in [t]$; this
	holds if and only if $|S_{j,p}^i| = d_j^i = (k-1)D_\chi(T_{v_{j,1}};k;1)$ for all $p\in[m_j]$.
	Noting that $(k-1)D_\chi(T_{v_{j,1}};k;1)=(k-1)D_\chi(T_{v_{j,p}};k;1)$, equality holds if and only if $D_\chi(T_{v_{j,p}};k;1)=D_\chi(T_{v_{j,p}};L;i')$ for all $i'\in L(v_{j,p})\setminus\{i\}$.
	Because there are multiple choices of $i\in L(x)$, it follows that equality holds for all colors in $L(x)$ if and only if $D_\chi(T_{v_{j,p}};k;1)=D_\chi(T_{v_{j,p}};L;i')$ for all $i'\in L(v_{j,p})$.
	Thus, by induction, the lists are identical in $T_{v_{j,p}}$.
Furthermore, $r_j^i =  {(k-1)D_\chi(T_{v_{j,1}};k,1)\choose |C_j|}$ for all $j \in [t]$ if and only if color $i$ appears in the list $L(v_{j,p})$ for all $p \in [m_j]$.
This is true for all $i\in L(x)$, so $x$ and all of its children have the same list.
%
%
%
%
%
\end{proof}

If $T_x$ is a rooted tree, then $\chi_D(T_x)$ is the minimum $k$ so that $D_\chi(T_x; k, 1)>0$, 
	and $\chi_{D_\ell}(T_x)$ is the minimum $k$ so that 
	for every assignment $L$ of lists of size $k$,
	there exists an $i \in L(x)$ such that
	$D_\chi(T_x; L, i)$ is positive.
By considering the list assignment in which every vertex gets the list $[k]$, the following corollary is immediate from Theorem \ref{theorem:list_chromatic_count}.

\begin{corollary}\label{cor:chirooted}
If $T_r$ is a rooted tree, then $\chi_D(T_r) = \chi_{D_\ell}(T_r)$.
\end{corollary}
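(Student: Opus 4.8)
The plan is to establish the two inequalities $\chi_D(T_r) \le \chi_{D_\ell}(T_r)$ and $\chi_{D_\ell}(T_r) \le \chi_D(T_r)$ separately, translating each parameter into a statement about the counting functions $D_\chi(T_r; k, 1)$ and $D_\chi(T_r; L, i)$ exactly as reformulated in the paragraph preceding the corollary. All of the genuine combinatorial content is already packaged in the lower bound of Theorem~\ref{theorem:list_chromatic_count}, so the work here is purely a dictionary translation between the counting functions and the two parameters.

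For the inequality $\chi_D(T_r) \le \chi_{D_\ell}(T_r)$, I would set $k = \chi_{D_\ell}(T_r)$ and specialize to the constant list assignment $L$ with $L(v) = [k]$ for every $v \in V(T_r)$. Under this assignment a proper $L$-distinguishing coloring is exactly a proper distinguishing $k$-coloring, so the defining property of $\chi_{D_\ell}$ supplies some $i \in [k]$ with $D_\chi(T_r; L, i) > 0$, which forces $D_\chi(T_r; k, 1) > 0$. Hence $\chi_D(T_r) \le k = \chi_{D_\ell}(T_r)$.

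For the reverse inequality $\chi_{D_\ell}(T_r) \le \chi_D(T_r)$, I would set $k = \chi_D(T_r)$, so that $D_\chi(T_r; k, 1) > 0$ by definition. Given an arbitrary list assignment $L$ with $|L(v)| = k$ for all $v$, Theorem~\ref{theorem:list_chromatic_count} yields $D_\chi(T_r; L, i) \ge D_\chi(T_r; k, 1) > 0$ for every $i \in L(r)$. In particular there exists a color $i \in L(r)$ admitting a distinguishing proper $L$-coloring, so $T_r$ is properly $L$-distinguishable; since $L$ was arbitrary, $\chi_{D_\ell}(T_r) \le k = \chi_D(T_r)$. Combining the two inequalities gives the claimed equality.

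I expect essentially no obstacle: once Theorem~\ref{theorem:list_chromatic_count} is in hand the corollary is immediate, and the only points requiring care are the two reformulations. First, I must confirm that the constant list $[k]$ recovers precisely the ordinary proper distinguishing colorings counted by $D_\chi(T_r; k, 1)$. Second, the minimizing $k$ in each definition must be well-defined, which rests on the implicit monotonicity that a proper distinguishing $k$-coloring remains valid once an additional color is made available, so that positivity of $D_\chi(T_r; k, 1)$ persists as $k$ increases; an analogous remark lets a list of size $\ge k$ be shrunk to a size-$k$ sublist, so checking size-$k$ lists suffices for $\chi_{D_\ell}$.
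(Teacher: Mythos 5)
Your proposal is correct and matches the paper's argument exactly: the paper declares the corollary ``immediate from Theorem~\ref{theorem:list_chromatic_count}'' via the constant list assignment $[k]$, and your two-inequality translation is precisely the routine verification the paper leaves implicit. No differences worth noting.
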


The proof of Theorem \ref{theorem:tree_dist} followed from the reduction from unrooted to rooted trees in 
	Lemma \ref{lma:rootedunrooted} and the equality given in Corollary \ref{cor:rootedlistequality}.
For the distinguishing chromatic number and list distinguishing chromatic number,
such a reduction works in most cases.

\begin{lemma}\label{lemma:chiunrooted}
	If the center of $T$ is a single vertex or $\chi_D(T) \geq 3$, then $\chi_D(T) = \chi_{D_{\ell}}(T)$.
\end{lemma}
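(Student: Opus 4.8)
The plan is to reduce the unrooted tree $T$ to a rooted tree $T_r$ for which Corollary~\ref{cor:chirooted} applies, splitting into cases based on the structure of the center. By Jordan's theorem, the center of $T$ is either a single vertex or an edge, and is setwise stabilized by every automorphism. In the first case, where the center is a single vertex $c$, I would root $T$ at $c$. Since every automorphism of $T$ fixes $c$, we have $\Aut(T) = \Aut(T_c)$, so a coloring distinguishes $T$ if and only if it distinguishes $T_c$; the same correspondence holds for proper colorings and for list assignments. Hence $\chi_D(T) = \chi_D(T_c)$ and $\chi_{D_\ell}(T) = \chi_{D_\ell}(T_c)$, and Corollary~\ref{cor:chirooted} gives $\chi_D(T_c) = \chi_{D_\ell}(T_c)$, completing this case.

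The remaining case is when the center is an edge $e = xy$ and $\chi_D(T) \geq 3$. Here the naive rooting trick from Lemma~\ref{lma:rootedunrooted} --- subdividing $e$ and rooting at the new vertex $w$ --- is exactly what creates the subtlety, because subdivision changes proper colorings: the new vertex $w$ must receive a color different from both $x$ and $y$, and an automorphism of $T$ swapping $x$ and $y$ corresponds to an automorphism of the subdivided tree $T'$ fixing $w$. I would argue that a distinguishing proper coloring of $T$ using colors from lists $L$ extends to a distinguishing proper coloring of $T'$ precisely when we can assign $w$ a color in its list distinct from the colors of $x$ and $y$, and conversely that restricting a distinguishing proper coloring of $T'$ to $T$ yields a distinguishing proper coloring of $T$. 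The condition $\chi_D(T) \geq 3$ is what guarantees enough slack: with at least three colors available we can always choose a valid color for the subdivision vertex $w$ avoiding the two colors on its neighbors $x$ and $y$, so the correspondence between colorings of $T$ and $T'$ is clean in both directions.

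Concretely, I would show $\chi_D(T) = \chi_D(T')$ and $\chi_{D_\ell}(T) = \chi_{D_\ell}(T')$ for the subdivided rooted tree $T'$, then invoke Corollary~\ref{cor:chirooted}. For the ordinary chromatic versions the argument mirrors Cheng's: a proper distinguishing $k$-coloring of $T'$ restricts to one of $T$, and conversely, given such a coloring of $T$ and $k \geq 3$, one extends by coloring $w$ with any of the $k - 2 \geq 1$ colors avoiding $\phi(x)$ and $\phi(y)$, noting that these extensions stay distinguishing since $\Aut(T')$ corresponds to $\Aut(T)$ through $w$. For the list versions, I would take an arbitrary list assignment $L'$ on $T'$, restrict it to $T$, apply list distinguishability of $T$, and then extend to $w$ using that $|L'(w)| = k \geq 3$ forces a color in $L'(w) \setminus \{\phi(x), \phi(y)\}$; the reverse direction lifts a list assignment on $T$ to one on $T'$ by giving $w$ any list of size $k$.

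The main obstacle I anticipate is verifying that the extension to the subdivision vertex $w$ never breaks the distinguishing property, and doing so cleanly in the list setting. Specifically, I must confirm that no nontrivial automorphism of $T'$ fixing $w$ and preserving colors arises from the extension --- the concern being an automorphism that swaps $x$ and $y$ while fixing $w$. This is controlled because such an automorphism would have to preserve the colors on $T_x$ and $T_y$, which the distinguishing coloring of $T$ already prevents (the coloring of $T$ rules out the corresponding nontrivial automorphism of $T$ that interchanges the two sides of the center edge). The hypothesis $\chi_D(T) \geq 3$ is exactly the technical hinge that makes the extension always available; I expect the bulk of the care to go into writing out this equivalence of automorphism groups and checking that the list sizes are preserved under both directions of the subdivision correspondence.
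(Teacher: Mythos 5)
Your Case 1 matches the paper exactly. In Case 2, however, there is a genuine gap: you claim that restricting a distinguishing proper coloring of the subdivided tree $T'$ to $T$ yields a distinguishing proper coloring of $T$, and your list-version argument relies on the same restriction step (lift a list assignment from $T$ to $T'$, properly distinguish $T'$, restrict). But the center edge $xy$ of $T$ is \emph{not} an edge of $T'$ --- it has been subdivided --- so a proper coloring of $T'$ is free to assign $x$ and $y$ the same color: they are merely siblings under the root $w$, and Observation~\ref{obs:distinguishsiblings} only forces the colorings of the two subtrees to be inequivalent, not to differ at their roots. The restriction to $T$ is then not proper. Distinguishability is indeed preserved under restriction, as you argue, but properness is exactly what breaks; the obstacle you anticipated lies in the wrong direction. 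The easy direction (extending a coloring of $T$ to $T'$ by coloring $w$, using $k\ge 3$) is fine and gives $\chi_D(T')\le\chi_D(T)$ and $\chi_{D_\ell}(T')\le\chi_{D_\ell}(T)$, but neither of these is the inequality needed to close the argument; what is needed is $\chi_{D_\ell}(T)\le\chi_{D_\ell}(T')$, which requires producing a coloring of $T$ from information about $T'$.

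The paper avoids this by never restricting a coloring of $T'$. Given a list assignment $L$ on $V(T')$ with lists of size $k=\chi_{D_\ell}(T')$, it applies Corollary~\ref{cor:chirooted} and Theorem~\ref{theorem:list_chromatic_count} to the two rooted subtrees $T'_x$ and $T'_y$ separately: Theorem~\ref{theorem:list_chromatic_count} guarantees a distinguishing proper $L$-coloring of $T'_x$ in which $x$ receives color $i$ for \emph{every} $i\in L(x)$, and likewise for $T'_y$. Since $k\ge 2$, one may choose $i\in L(x)$ and $j\in L(y)$ with $i\ne j$, and gluing the two colorings gives a distinguishing proper coloring of $T$ from the restricted lists directly. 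This yields $\chi_{D_\ell}(T)\le\chi_{D_\ell}(T')=\chi_D(T')\le\chi_D(T)$, closing the chain. To repair your proof you would need this (or an equivalent) device for producing a proper coloring of $T$ in which $x$ and $y$ receive different colors; the restriction correspondence alone cannot supply it.
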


\begin{proof}
\noindent{\bf Case 1:} {\it The center of $T$ is a vertex $x$.}
	Since all automorphisms set-wise stabilize the center, 
		$x$ is stabilized by all automorphisms of $T$.
	Rooting the tree at $x$ does not change the automorphism group, so a $T_x$-distinguishing proper coloring of $V(T)$
		is also a $T$-distinguishing proper coloring.
	If $T'$ is $T$ rooted at $x$, then $\chi_D(T) = \chi_D(T') = \chi_{D_\ell}(T') = \chi_{D_\ell}(T)$.
		
\noindent{\bf Case 2:} {\it The center of $T$ is an edge $uv$.}
	Let $T'$ be obtained by subdividing $uv$ and rooting the resulting tree at 
		the unique vertex $x$ in $T' - T$, 
		which is the center of $T'$. 
	The automorphisms of $T'$ are given by the actions of automorphisms of $T$ on $V(T)$ while stabilizing $x$.
	
By Corollary~\ref{cor:chirooted}, $\chi_D(T') = \chi_{D_\ell}(T')$.
Since any $T$-distinguishing proper $k$-coloring $\phi$ uses at least $3$ colors, it is possible to extend $\phi$ to a $T'$-distinguishing proper $k$-coloring by assigning $x$ a color that is not $\phi(u)$ or $\phi(v)$.
Thus $\chi_D(T')\le \chi_D(T)$.

Since $T'$ contains an edge, $\chi_{D_\ell}(T')\ge 2$; let $k=\chi_{D_\ell}(T')$.
Consider a list assignment $L$ on $V(T')$ in which every list has size $k$, and let $L_T$ be the restriction of $L$ to $V(T)$.
Let $T'_v$ be the subtree of $T'$ rooted at $v$ consisting of all descendants of $v$; clearly $\chi_{D_\ell}(T'_v) \le \chi_{D_\ell}(T')$.
By Corollary~\ref{cor:chirooted}, $\chi_D(T'_v) = \chi_{D_\ell}(T'_v)$, so there is a $T'_v$-distinguishing proper $k$-coloring and we may assume that $v$ receives color $1$ in such a coloring.
Thus, by Theorem~\ref{theorem:list_chromatic_count}, there is a $T'_v$-distinguishing proper $L_v$-coloring in which $v$ gets color $i$ for each $i\in L(v)$.
Similarly, there is a $T'_u$-distinguishing proper $L_u$-coloring in which $u$ gets color $j$ for each $j\in L(u)$.
Because all lists have size at least $2$, it is possible to choose colors $i\in L(v)$ and $j\in L(u)$ so that $i\neq j$.
The combination of a distinguishing proper $L_v$-coloring of $T'_v$ in which $v$ receives color $i$ and a distinguishing proper $L_u$-coloring of $T'_u$ in which $u$ receives color $j$ produces a distinguishing proper $L_T$-coloring of $T$.
Since every list assignment on $V(T)$ is the restriction of some list assignment on $V(T')$, it follows that $\chi_{D_\ell}(T)\le \chi_{D_\ell}(T')$.

By considering the list assignment in which every vertex gets the same list of length $k$, we see that $\chi_{D}(T)\le \chi_{D_\ell}(T)$.
Therefore 
\[
	\chi_{D}(T)\le \chi_{D_\ell}(T)\le \chi_{D_\ell}(T')=\chi_D(T')\le \chi_D(T).
	\qedhere
\]
%
%
%
%
%
%
%
%
\end{proof}

\begin{lemma}\label{lemma:chiunrooted2}
	If $T$ is an unrooted tree with center $\{u,v\}$ 
		and $\chi_D(T) = 2$, then $\chi_{D_\ell}(T) = 2$.
\end{lemma}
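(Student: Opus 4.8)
The plan is to prove the upper bound $\chi_{D_\ell}(T)\le 2$ directly, without passing to a subdivided tree as in Case~2 of Lemma~\ref{lemma:chiunrooted}; the matching lower bound $\chi_{D_\ell}(T)\ge\chi_D(T)=2$ follows by applying the definitions to the uniform list assignment $L(v)=[k]$. Write $T_u$ and $T_v$ for the two components of $T-uv$, rooted at $u$ and $v$ respectively, so that every automorphism of $T$ either fixes both $u$ and $v$, restricting to a pair of rooted automorphisms of $T_u$ and $T_v$, or swaps $u$ and $v$, which is possible only when $T_u\cong T_v$.

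The crucial structural step is to extract strong information from the hypothesis $\chi_D(T)=2$. I would argue that both $T_u$ and $T_v$ must be rigid, i.e.\ have trivial rooted automorphism group. Since a tree is bipartite with a unique bipartition, it has exactly two proper $2$-colorings, and any automorphism fixing the vertex $u$ preserves both sides of the bipartition and hence preserves every proper $2$-coloring. Thus if, say, $T_u$ admitted a nontrivial rooted automorphism $\alpha$, then extending $\alpha$ by the identity on $T_v$ would give a nontrivial color-preserving automorphism of $T$ for every proper $2$-coloring, forcing $\chi_D(T)\ge 3$ and contradicting the hypothesis. Consequently the only possible nontrivial automorphism of $T$ is the involution $\tau$ that interchanges $u$ and $v$, present precisely when $T_u\cong T_v$.

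From here the argument is short. Since $\tau$ interchanges the adjacent vertices $u$ and $v$, any $\tau$-invariant coloring $\phi$ would satisfy $\phi(u)=\phi(\tau(u))=\phi(v)$; but a proper coloring gives $\phi(u)\ne\phi(v)$, so no proper coloring is $\tau$-invariant. As $\tau$ is the only candidate nontrivial automorphism, every proper coloring of $T$ is automatically distinguishing. It therefore suffices to produce a proper $L$-coloring from an arbitrary list assignment $L$ with $|L(v)|=2$. This is exactly the $2$-choosability of trees, which I would verify directly: root $T$ arbitrarily and color in top-down order, giving each non-root vertex a color in its list distinct from that of its already-colored parent, which is always possible because each list has size $2$. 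Combining, every size-$2$ list assignment admits a proper, hence distinguishing, $L$-coloring, so $\chi_{D_\ell}(T)\le 2$.

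The main obstacle is the structural step of the second paragraph: recognizing that the failure of the subdivision approach (which needed a third color at the new central vertex, and is exactly why $\chi_D(T)=2$ was excluded in Lemma~\ref{lemma:chiunrooted}) is precisely compensated by the rigidity that $\chi_D(T)=2$ forces on $T_u$ and $T_v$. Once rigidity is established, the remaining observations---that $\tau$ can never preserve a proper coloring and that trees are $2$-choosable---are routine.
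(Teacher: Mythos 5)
Your proof is correct and follows essentially the same route as the paper: both arguments use the fact that a proper $2$-coloring of $T$ is determined by distance-parity to $u$, so that $\chi_D(T)=2$ forces every automorphism fixing $u$ and $v$ pointwise to be trivial, leaving only the $u\leftrightarrow v$ swap, which can never preserve a proper coloring since $uv$ is an edge. Your packaging of this via rigidity of the rooted components $T_u$ and $T_v$, and your explicit appeal to $2$-choosability of trees (which the paper leaves implicit), are only cosmetic differences.
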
	

\begin{proof}
	Let $\phi$ be a proper 2-coloring of $T$.
	Note that $\phi(u) \neq \phi(v)$.
	Since $T$ is connected, the color at any vertex $y \in V(T)$ 
		is specified by the parity of the distance from $y$ to $u$:
		$\phi(y) = \phi(u)$ if and only if the distance from $y$ to $u$ is even.
	Hence, there are exactly two proper 2-colorings of $T$ and they are equivalent
		under a permutation of the colors.
	Since $\chi_D(T) = 2$, such a coloring must distinguish $T$.
	The center of $T$ is set-wise stabilized by all automorphisms of $T$, 
		so the distance from any vertex $y \in V(T)$ to $\{u, v\}$ is 
		preserved under all automorphisms of $T$.
	If $u$ and $v$ are point-wise stabilized by a non-trivial automorphism $\sigma$, then every $y \in V(T)$ is mapped to another element of the same distance to $u$.
Therefore $\phi(y) = \phi(\sigma(y))$ and this 2-coloring does not distinguish $T$.
	Thus, any non-trivial automorphism $\phi$ must swap $u$ and $v$.
The product of any two non-trivial automorphisms point-wise stabilize $u$ and $v$, and therefore the product is the identity.
	Hence, the automorphism group of $T$ is isomorphic to ${\mathbb Z}_2$ and any coloring
		$\phi$ where $\phi(u) \neq \phi(v)$ distinguishes $T$.
	Thus, $\chi_{D_\ell}(T) = 2 = \chi_D(T)$.
\end{proof}

Theorem~\ref{thm:chromaticunrooted} follows immediately from 
	Lemma~\ref{lemma:chiunrooted} and Lemma~\ref{lemma:chiunrooted2}.

\section{Distinguishing Chromatic Number}

In this section, we show that the distinguishing chromatic number and the distinguishing number of a tree differ by at most one and the bound is sharp.
We also describe when these parameters are different.

\begin{theorem} 
	If $T_x$ is a rooted tree, then $\chi_D(T_x) \leq D(T_x) + 1$.
\end{theorem}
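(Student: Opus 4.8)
The plan is to fix $k = D(T_x)$ and to exhibit a proper distinguishing coloring using $k+1$ colors, which gives $\chi_D(T_x) \le k+1 = D(T_x)+1$. Since $\chi_D(T_x)$ is the least $k'$ with $D_\chi(T_x; k', 1) > 0$, it suffices to prove $D_\chi(T_x; k+1, 1) > 0$. I would deduce this from the following quantitative comparison, established by induction on $T_x$: for every vertex $v$,
\[
	k \cdot D_\chi(T_v; k+1, 1) \ \ge\ D(T_v; k).
\]
Applying this with $v = x$ and using $D(T_x; k) > 0$ (which holds because $k = D(T_x)$) immediately yields $D_\chi(T_x; k+1, 1) > 0$, completing the argument.

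For the induction, the base case is a leaf $v$: there $D_\chi(T_v; k+1, 1) = 1$ and $D(T_v; k) = k$, so the inequality holds with equality. For the inductive step, partition the children of $v$ into isomorphism classes $C_1, \dots, C_t$ with representatives $u_1, \dots, u_t$. Lemma~\ref{lemma:chengrecursion} gives $D(T_v; k) = k \prod_{j} \binom{D(T_{u_j}; k)}{|C_j|}$, hence $D(T_v; k)/k = \prod_j \binom{D(T_{u_j}; k)}{|C_j|}$; and Claim~\ref{claim:chicount}, applied with $k+1$ colors so that the factor $(k'-1)$ equals $k$, gives $D_\chi(T_v; k+1, 1) = \prod_j \binom{k\, D_\chi(T_{u_j}; k+1, 1)}{|C_j|}$. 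By the induction hypothesis, $k\, D_\chi(T_{u_j}; k+1, 1) \ge D(T_{u_j}; k)$ for each $j$, and since $\binom{n}{m}$ is nondecreasing in the integer $n$ (with $\binom{n}{m} = 0$ for $n < m$), each factor $\binom{k\, D_\chi(T_{u_j}; k+1, 1)}{|C_j|}$ is at least the corresponding factor $\binom{D(T_{u_j}; k)}{|C_j|}$. Multiplying over $j$ yields $D_\chi(T_v; k+1, 1) \ge D(T_v; k)/k$, which is the claimed inequality. Note that this comparison is unconditional: if some $D(T_{u_j}; k) = 0$ then $D(T_v; k) = 0$ and the inequality is trivial.

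The main obstacle, and the reason the naive comparison $D_\chi(T_v; k+1) \ge D(T_v; k)$ does not close the induction, is the mismatch in how the root color enters the two recursions: in $D(T_v; k)$ the root contributes a clean factor of $k$, whereas in $D_\chi(T_v; k+1, 1)$ the proper constraint forbids the root's color on each child, so every child subtree sees only $k$ usable colors rather than $k+1$. Carrying the extra factor of $k$ through the induction, that is, comparing $k\, D_\chi(T_v; k+1, 1)$ with $D(T_v; k)$ instead of comparing $D_\chi$ with $D$ directly, is precisely what absorbs this discrepancy and makes the per-class binomial comparison line up. The only routine point left to verify is the monotonicity of the binomial coefficient in its upper argument.
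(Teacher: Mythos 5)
Your proof is correct, but it takes a genuinely different route from the paper. The paper's argument is a direct construction: starting from a distinguishing $k$-coloring $\phi$, it recolors top-down, assigning a fresh color $*$ to any child whose $\phi$-color collides with its parent's new color, and then checks that the result is proper and that any color-preserving automorphism of the new coloring already preserved $\phi$ (because siblings agreeing under $\phi'$ must agree under $\phi$). Your argument is instead enumerative: you carry the factor of $k$ through the induction and compare the two recursions, Lemma~\ref{lemma:chengrecursion} and Claim~\ref{claim:chicount}, term by term via monotonicity of $\binom{n}{m}$ in $n$, obtaining $k\,D_\chi(T_v;k+1,1)\ge D(T_v;k)$ and hence positivity at the root. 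Both are complete; your identification of the root-color mismatch as the reason the naive comparison fails, and the fix of multiplying by $k$, is exactly the right technical move, and the binomial monotonicity step is immediate from Pascal's rule. The paper's construction is more elementary and self-contained (it does not need Claim~\ref{claim:chicount}, which in the paper is only established inside the proof of Theorem~\ref{theorem:list_chromatic_count}), and it explicitly produces the witnessing coloring. Your quantitative inequality buys more than the bare bound: it is in the same spirit as Theorems~\ref{theorem:list_dist_count} and~\ref{theorem:list_chromatic_count}, and analyzing when your per-class comparison degenerates (a factor $\binom{(k-1)D_\chi(T_{u};k,1)}{|C_j|}$ vanishing) leads naturally to the characterization in Theorem~\ref{thm:certificate} of when $\chi_D(T)=D(T)+1$. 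One point worth making explicit if you write this up: the reduction from $\chi_D(T_x)\le k+1$ to $D_\chi(T_x;k+1,1)>0$ uses that the set of $k'$ with $D_\chi(T_x;k',1)>0$ is upward closed, which holds because a proper distinguishing $k'$-coloring is also a proper distinguishing $(k'+1)$-coloring.
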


\begin{proof}
	Let $\phi : V(T_x) \to \{1,\dots, k\}$ be a distinguishing $k$-coloring of $T_x$.
	We create a distinguishing proper $(k+1)$-coloring $\phi' : V(T) \to \{1,\dots, k, *\}$ 
		starting at the root: let $\phi'(x) = \phi(x)$.
	Proceed recursively; after coloring a vertex $v$, consider a child $u$ of $v$.
	If $\phi'(v) = \phi(u)$, then let $\phi'(u) = *$.
	Otherwise, let $\phi'(u) = \phi(u)$.
	Hence,  a vertex $u$ receives color $*$ if and only if its parent $v$
		has $\phi(v) = \phi(u)$ and the previous step assigned $\phi'(v) = \phi(v)$.
	Therefore, $\phi'$ is a proper coloring.
	Also note that if $x$ and $y$ are siblings with $\phi'(x) = \phi'(y)$, 
		then $\phi(x) = \phi(y)$.
	Since any nontrivial automorphism of $T_x$ interchanges subtrees rooted at siblings, any $\phi'$-preserving automorphism is also a $\phi$-preserving automorphism.
	Therefore $\phi'$ is distinguishing if $\phi$ is distinguishing.
\end{proof}

It follows from Lemmas \ref{lma:rootedunrooted}, \ref{lemma:chiunrooted}, and \ref{lemma:chiunrooted2} that $\chi_D(T) \leq D(T) + 1$ for unrooted trees as well.

We now characterize the trees for which $\chi_D(T) = D(T) + 1$.
Again we let $T'$ be the rooted tree obtained by rooting $T$ at its center if the center is unique, or rooting at the vertex obtained by subdividing the central edge of $T$.

\begin{theorem}\label{thm:certificate}
	If $T$ is a tree with $D(T)=k$, then $\chi_D(T) = k + 1$ if and only if 
		\begin{cem}
		\item $|V(T)|\ge 2$ and $k=1$, or 
		\item there exists a vertex $x \in V(T)$ and a set $S$ of children of $x$ in $T'$ so that the subtrees $T_u$ and $T_v$ are isomorphic for all $u, v \in S$ and $(k-1)D_\chi(T_u; k, 1) < |S|$ for $u \in S$.
		\end{cem}
\end{theorem}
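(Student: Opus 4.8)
The plan is to first observe that the preceding bound $\chi_D(T)\le D(T)+1$, combined with the trivial inequality $D(T)\le\chi_D(T)$, pins $\chi_D(T)$ to one of the two values $k$ or $k+1$. Hence $\chi_D(T)=k+1$ holds precisely when $T$ admits \emph{no} distinguishing proper $k$-coloring, and the whole theorem reduces to characterizing the non-existence of such a coloring. The case $k=1$ is disposed of immediately: $D(T)=1$ forces $\Aut(T)$ to be trivial, so if $|V(T)|\ge2$ then $T$ has an edge, no proper $1$-coloring exists at all, and $\chi_D(T)=2=k+1$; if $|V(T)|=1$ then $\chi_D(T)=1=k$. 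This is exactly condition~(1).

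For $k\ge2$ the engine is the recursion of Claim~\ref{claim:chicount}. Since $\binom{n}{m}>0$ iff $n\ge m$, that recursion shows $D_\chi(T_v;k,1)>0$ iff for every isomorphism class $C_j$ of children of $v$ one has $(k-1)D_\chi(T_{u_j};k,1)\ge|C_j|$; call such a $v$ \emph{good}, and otherwise \emph{bad}. Propagating downward, the root $x_0$ of $T'$ is good iff every vertex of $T'$ is good, so $D_\chi(T';k,1)=0$ iff $x_0$ is bad, and in that case a \emph{lowest} bad vertex $w$ has all children good yet fails the binomial condition on some class $C_j$ --- which is exactly condition~(2) with $x=w$ and $S=C_j$, provided $w\in V(T)$.

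For the direction $(2)\Rightarrow\chi_D(T)=k+1$ I will argue directly in $T$, avoiding any rooted reduction. The set $S$ consists of children of $x$ in $T'$, and in every center configuration these correspond to genuine branches of $T$ hanging off the neighbors of $x$ away from the center; swapping two isomorphic such branches while fixing everything else is a nontrivial automorphism of $T$. In any putative distinguishing proper $k$-coloring $\phi$, each branch $T_u$ with $u\in S$ receives a distinguishing proper coloring whose root color avoids $\phi(x)$, so it lies in one of at most $\sum_{i\ne\phi(x)}D_\chi(T_u;k,i)=(k-1)D_\chi(T_u;k,1)$ equivalence classes. Since $|S|>(k-1)D_\chi(T_u;k,1)$, two branches get equivalent colorings by pigeonhole, the branch-swap automorphism is then color-preserving by Observation~\ref{obs:distinguishsiblings}, and $\phi$ fails to distinguish --- so no such $\phi$ exists.

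For the converse, assume $\chi_D(T)=k+1$; since $k\ge2$ this gives $\chi_D(T)\ge3$, so Lemma~\ref{lemma:chiunrooted} yields $\chi_D(T)=\chi_D(T')$ and hence $D_\chi(T';k,1)=0$. A lowest bad vertex $w$ then produces condition~(2) as soon as $w\in V(T)$. The main obstacle is that when the center of $T$ is an edge, $T'$ carries the extra subdivision vertex $x_0\notin V(T)$, and $w$ could a priori equal $x_0$. I will resolve this by a short case analysis on the two children $u,v$ of $x_0$: if $T_u\not\cong T_v$, or $T_u\cong T_v$ with $k\ge3$, then $x_0$ being bad forces $u$ or $v$ to be bad, contradicting that $w=x_0$ is lowest. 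The only residual possibility is $k=2$, $T_u\cong T_v$, and $D_\chi(T_u;2,1)=1$ with $u,v$ good; in that case I will exhibit a distinguishing proper $2$-coloring of $T$ outright --- take the bipartite coloring with $\phi(u)=1,\phi(v)=2$, note its restrictions to $T_u$ and $T_v$ are distinguishing, and check that any automorphism either swaps $u,v$ (color-violating) or fixes them and is therefore the identity on each rigidly colored side. This contradicts $\chi_D(T)=3$, ruling out $w=x_0$ and forcing $w\in V(T)$. This edge-center bookkeeping, together with verifying that the residual configuration genuinely yields $\chi_D(T)=k$ (witnessed already by $K_2$), is where the real care is required.
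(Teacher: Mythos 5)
Your proposal is correct and rests on the same engine as the paper's proof --- Claim~\ref{claim:chicount} together with the observation that $\binom{n}{m}=0$ iff $n<m$ --- but you execute two steps with genuinely more care than the paper does. First, the paper applies the product formula only at the root of $T'$ and asserts the equivalence with a vanishing binomial ``for some $j$''; your explicit ``lowest bad vertex'' propagation is the honest way to unwind that recursion down to a specific vertex $x$ and class $S$. Second, and more substantively, you isolate the edge-center complication that the paper silently skips: when the center of $T$ is an edge, the bad vertex could a priori be the subdivision root $x_0\notin V(T)$, and for $k=2$ with $T_u\cong T_v$ this configuration really occurs with $u,v$ good (e.g.\ $K_2$); your direct construction of a distinguishing proper $2$-coloring in that residual case is exactly what is needed to force the bad vertex into $V(T)$, and it is absent from the paper. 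Likewise, your pigeonhole argument carried out directly in $T$ for the direction $(2)\Rightarrow\chi_D(T)=k+1$ sidesteps the fact that the paper's reduction from $T$ to $T'$ (Case~2 of Lemma~\ref{lemma:chiunrooted}) only extends a coloring of $T$ to $T'$ when $k\ge 3$, so the paper's converse as written does not cover $k=2$ with an edge center. In short: same skeleton, but your version closes small gaps in both directions of the equivalence that the paper's terse argument leaves open.
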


\begin{proof}

If $D(T)=1$ and $T$ has more than one vertex, then clearly $\chi_D(T)=2$.
Thus we may assume that $D(T)\ge 2$, and we only consider graphs satisfying $\chi_D(T)\ge 3$.
By Lemmas~\ref{lma:rootedunrooted} and~\ref{lemma:chiunrooted}, it suffices to consider the rooted tree $T'$.

By Claim \ref{claim:chicount}, the number of proper distinguishing $k$-colorings of the rooted tree $T'_x$ is given by $kD_\chi(T'_x; k, 1) = k\prod_{j=1}^t {(k-1)D_\chi(T'_{v_j}; k, 1) \choose |C_j|}$.
There exists no proper distinguishing $k$-coloring of $T'$ if and only if this product is zero. 
This product is zero if and only if $(k-1)D_\chi(T'_{v_j}; k, 1) < |C_j|$ for some $j$.
If $(k-1)D_\chi(T'_{v_j}; k, 1) < |C_j|$ for some $j$, then letting $S=C_j$ suffices.
Conversely, if the set $S$ exists, then $S\subseteq C_j$ for some $j$.
If $v_j$ is an element of $S$, then $(k-1)D_\chi(T'_{v_j}; k, 1) < S\le|C_j|$, and there is no proper distinguishing $k$-coloring of $T'$.
%
%
%
%
%
%
\end{proof}

We conclude this section by noting that
Cheng's algorithm for computing $D(T)$ can be adapted to compute $\chi_D(T)$ 
	by replacing the counting method in Lemma \ref{lemma:chengrecursion}
	with Claim \ref{claim:chicount}.
This leads to a polynomial time\footnote{Cheng's algorithm runs in $O(n \log n)$ time and this modification replaces the formula in Lemma \ref{lemma:chengrecursion} with the formula in Claim \ref{claim:chicount}, giving $O(n\log n)$ time to compute $\chi_D(T)$.} algorithm to determine if $\chi_D(T) = D(T)$
	and if not, it can produce the certificate $(v, S)$ from Theorem \ref{thm:certificate}.


\bibliographystyle{plain}
\bibliography{distinguishing}

%
%
%
%
%
%

\end{document}